\documentclass[letterpaper, 10 pt, conference]{ieeeconf}  

\usepackage{amsmath, amssymb, mathtools}
\usepackage{xcolor}
\usepackage{url}
\usepackage{graphicx}
\usepackage{adjustbox}

\usepackage{algorithm}

\newtheorem{theorem}{Theorem}
\newtheorem{proposition}{Proposition}

\newtheorem{assumption}{Assumption}

\newcommand{\R}{\ensuremath{\mathbb R}}  
\newcommand{\N}{\ensuremath{\mathbb N}}  
\renewcommand{\k}{\mathsf{k}}

\newcommand{\F}{F^\varepsilon}

\newcommand{\V}{V^\varepsilon}
\newcommand{\J}{J^\varepsilon}
\newcommand{\B}{B^\varepsilon}
\newcommand{\LF}{L_{F^\varepsilon_\mathrm{y}}}
 
\newcommand{\Y}{Y^\varepsilon}

\usepackage{comment}
\usepackage{lipsum}

\usepackage{cite}

\newcommand{\karl}[1]{\textcolor{black}{#1}}

\usepackage{etoolbox}
\newbool{arxiv}
\setbool{arxiv}{true}

\IEEEoverridecommandlockouts                              

\title{\LARGE \bf
Exponential stability of data-driven nonlinear 
MPC based on input/output models
}

\author{Lea Bold$^{1}$, Irene Schimperna$^{2}$, Karl Worthmann$^{1}$, Johannes Köhler$^{3}$
\thanks{%
    $^{1}$L.\,Bold and K.\,Worthmann are with the Optimization-based Control Group, Institute of Mathematics, TU Ilmenau, Ilmenau, Germany. {\tt\small [lea.bold, karl.worthmann]@tu-ilmenau.de}. L.\,Bold and K.\,Worthmann gratefully acknowledge funding by the German Research Foundation (DFG; project numbers $545246093$ and 535860958).}%
    \thanks{$^{2}$I.\,Schimperna is with the Department of Electrical, Computer and Biomedical Engineering, University of Pavia, Pavia, Italy. {\tt\small irene.schimperna01@universitadipavia.it}}    \thanks{$^{3}$J.\,Köhler is with the Department of Mechanical Engineering, Imperial College London, London, UK. {\tt\small j.kohler@imperial.ac.uk}}%
}

\begin{document}

\maketitle

\begin{abstract}
We consider nonlinear model predictive control (MPC) schemes \karl{without stabilizing terminal conditions, where the model used in the optimization step is generated} 
based on input-output data only.  
We establish exponential stability for sufficiently long prediction horizons assuming exponential stabilizability and a proportional error bound. 
Moreover, we verify the imposed condition on the approximation using kernel interpolation and demonstrate the practical applicability to nonlinear systems \karl{by numerical simulations}. 
\end{abstract}


\section{Introduction}
Model predictive control (MPC) is nowadays a well-established advanced control technique, where the input is determined by solving a finite-horizon constrained optimal control problem~\cite{grune2017nonlinear}. 
For the stability analysis, 
terminal conditions are often utilized\karl{, see~\cite{RawlMayn17} and the references therein}. 
Alternatively, stability can be established 
using a sufficiently long prediction horizon and some stabilizability condition related to the stage cost~\cite{GrimMess05,GrunPann10,CoroGrun20,granzotto2020finite}\karl{, see \cite{grune2017nonlinear} for an overview}.
These results \karl{rely on} 
positive-definite stage cost penalizing the deviation 
from the desired set point. 
However, in \karl{practice}, 
output weighting is often preferred \karl{since state measurements might not be available neccessitating the use of input/output models. Moreover, output weighting} 
is closely related to closed-loop performance. 
\karl{Then}, 
the resulting stage cost is only positive semi-definite in the system state, and the stability analysis requires more general tools, based on \karl{detectability conditions on the stage cost}
~\cite{kohler2021constrained,KohlZeil23}.

MPC relies on a reliable and accurate model, which explains the recent research interest in data-driven approaches~\cite{berberich2025overview}.
To this end, a plethora of 
modeling techniques has been employed in MPC. 
For linear system, an effective method is based on the use of Willems' fundamental lemma~\cite{coulson2021distributionally}, \karl{which allows to directly solve the optimization problem in MPC based on non-parametric models using input/output data only}. 
For nonlinear systems, the explored approaches include Gaussian processes~\cite{scampicchio2025gaussian}, Koopman operator theory~\cite{korda2018linear,strasser2026overview}, neural networks~\cite{salzmann2023real}, and many more. 
Despite the availability of many different modeling techniques, data-driven models come with the presence of model-plant mismatch, which may impact the stability properties of the MPC scheme. 
To fill this gap, recent works have derived conditions, in which data-driven MPC preserves stability \karl{and closed-loop performance}. 
Considering MPC without terminal conditions, \cite{schimperna2025data} shows that asymptotic stability can be obtained in presence of proportional bounds on the modeling error, and proposes a \karl{framework} 
based on Koopman operator theory to \karl{generate data-driven} 
models satisfying the required bounds. 
Similar results have been derived for MPC with terminal conditions in~\cite{KuntRawl25}, considering models with parametric uncertainty. 
Finally, exponential stability of Koopman MPC with terminal conditions has been studied in~\cite{shang2025exponential,schimperna2025stability}, where the latter also introduces a constraint-tightening approach to guarantee robust constraint satisfaction despite model-plant mismatch.

These existing stability results~\cite{schimperna2025data,KuntRawl25,shang2025exponential,schimperna2025stability} rely on state-space models \karl{and, thus, require 
state measurements}. 
However, in many practical applications, only the system outputs are measurable, and, thus, MPC is designed on the base of input/output models. 
The goal of this paper, is to extend the results of asymptotic stability of MPC in presence of approximation errors to models in input/output form. 
In particular, we consider an MPC formulation \karl{with output-weighting stage cost, but} without terminal conditions, in which stability is studied relying on a sufficiently long optimization horizon and \karl{a cost detectability property~\cite{KohlZeil23}}. 
We show that exponential stability of the data-driven MPC closed loop can be achieved if the surrogate model satisfies proportional error bounds \karl{and} 
the system is exponential stabilizable. 
Further, we show that kernel interpolation is a suitable learning technique to provide data-driven surrogate models satisfying \karl{our requirements}. 
Finally, we demonstrate our findings in numerical simulations. 

The paper is organized as follows. Section~\ref{sec:problem} introduces the considered control problem and the MPC algorithm. 
Stability properties of the closed loop system are analyzed in Section~\ref{sec:stability}. In Section~\ref{sec:kernel}, we show how models satisfying the required conditions can be learned using kernel interpolation.
Finally, numerical experiments are reported in Section~\ref{sec:numerics} and conclusions are drawn in Section~\ref{sec:conclusions}.
\\

\noindent\textbf{Notation}:
For a \karl{symmetric 
matrix~$M$, $\overline{\lambda}(M)$ and $\underline{\lambda}(M)$ denote its maximum and minimum eigenvalue, resp. $I_n\in\mathbb{R}^{n \times n}$, $0_{n \times m}\in\mathbb{R}^{n \times m}$ denote the identity and the zero matrix, resp. (dimensions are omitted when clear). 
$\mathrm{diag}$ denotes block diagonal matrices,
$\|\cdot\|$ the Euclidean norm, 
$\|x\|_Q^2 := x^\top Q x$ for a matrix $Q$ and a vector $x$.
For 
integers $a,b$, $a < b$, we abbreviate 
$[a:b] \coloneqq \mathbb{Z} \cap [a, b]$ and 
$x(b:a) \coloneqq [x(b)^\top, x(b-1)^\top, \dots, x(a)^\top]^\top$.
We write~$\pm a \coloneqq + a - a$}. 

\section{Problem formulation and MPC algorithm} \label{sec:problem}
The aim of the paper is to control a nonlinear system using a surrogate model inferred from input/output data.
The considered system is described by
\begin{align} \label{eq:NARX:sys}
    y(k + 1) &= F_\mathrm{y}(x(k), u(k)) \\
    x(k) &= [y(k:k-\nu+1)^\top, u(k - 1: k - \nu +1)^\top]^\top\nonumber 
\end{align}
where~$y(k) \in \R^p$ is the system output and $u(k) \in \mathbb{U}$ is its control input. 
$\mathbb{U} \subset \R^m$ is a compact  
set containing the origin in its interior, and~$x(k) \in \R^{n}$ for $n = \nu p+(\nu - 1)m$ is 
\karl{a vector consisting of inputs and outputs from the last $\nu$ steps.}
The parameter~$\nu \in \N$ defines how many of the past observations are required to 
characterize the dynamics, and corresponds to the lag of the system.
For sake of simplicity, we assume 
$x(k) \in \Omega \subseteq \R^n$, where $\Omega$ contains the origin in its interior and is a positive invariant set w.r.t.\ 
system~\eqref{eq:NARX:sys} under inputs $u \in \mathbb{U}$.\footnote{The case in which the invariance condition does not hold is studied in~\cite{schimperna2025data, schimperna2025stability}, in which a uniform error bound is \karl{leveraged 
to tighten the constraints and, then, to determine a} 
set in which stability holds.} 

We assume that $F_\mathrm{y}:\R^{n} \times \R^m\rightarrow \R^p$ is Lipschitz continuous w.r.t.\ its first argument \karl{uniformly in~$u$}, i.e., 
there exists a Lipschitz constant~$L_{F_\mathrm{y}} 
> 0$, such that 
\begin{align*}
    \|F_\mathrm{y}(x, u) - F_\mathrm{y}(x', u)\| \leq L_{F_\mathrm{y}}\|x - x'\| \qquad\forall\,u \in \mathbb{U}
\end{align*} 
for all $x, x' \in \Omega$. 
The system~\eqref{eq:NARX:sys} can also be written in an equivalent state-space form as
\begin{align}\label{eq:state:sys}
x(k + 1) &= F_\mathrm{x}(x(k), u(k)) \text{ with}  \\
    F_\mathrm{x}(x,u) &= \begin{bmatrix}
    F_\mathrm{y}(x, u) \\ 
    [I_{(\nu - 1)p}, 0_{(\nu - 1)p \times p+(\nu-1) m}]x \\
    u \\
    [0_{(\nu-2)m\times \nu p}, I_{(\nu-2)m}, 0_{(\nu-2)m\times m}]x
\end{bmatrix}\label{eq:F_x}
\end{align}
where $F_\mathrm{x}(x,u)$ has a Lipschitz constant $L_{F_\mathrm{x}}$. 
We assume that the origin is a controlled equilibrium of system~\eqref{eq:state:sys}, i.e. $F_\mathrm{y}(0, 0) = 0$ and $F_\mathrm{x}(0, 0) = 0$.

The MPC design is based on a data-driven surrogate model 
\begin{align}\label{eq:NARX:sur}
\begin{split}
    y(k+1) & \approx \F_\mathrm{y}(x(k), u(k)) \\
\end{split}
\end{align}
with right-hand side approximation~$\F_\mathrm{y}$ of $F_\mathrm{y}$, where the superscript~$\varepsilon \in (0, \bar{\varepsilon}]$, $\bar{\varepsilon} > 0$, stands for the approximation accuracy and is used in the following to indicate a dependence on the surrogate dynamics. 
In Section~\ref{sec:kernel-regression} we introduce kernel regression as a possible method to compute such a \karl{surrogate model in a data-driven fashion}. 
The surrogate system also has an equivalent state dynamics given by $\F_\mathrm{x}$, analogously defined to~\eqref{eq:F_x}, which is also assumed to render the set~$\Omega$ positive invariant to streamline the exposition.

The goal of the paper is to show that in presence of suitable bounds on the modeling error, MPC using the surrogate model~\eqref{eq:NARX:sur} in the optimization step stabilizes the \karl{controlled} system
~\eqref{eq:NARX:sys}. 
The MPC is designed considering the quadratic input/output stage cost
~$\ell : \R^p \times \R^m\rightarrow \R_{\geq 0}$ given by
\begin{equation} \label{eq:stagecosts}
    \ell(y, u) = \|y\|_Q^2 + \|u\|_R^2,
\end{equation}
where \karl{$Q 
\in \R^{p\times p}$ and  $R 
\in \R^{m \times m}$ are symmetric} positive-definite weighting matrices. 
The MPC cost function $\J_N:\Omega \times \mathbb{U}^N \to \R_{\geq 0}$ is given by 
\begin{subequations}
\begin{align}
    \J_N(\hat{x}, \mathbf{u}) & = \sum\nolimits_{k = 0}^{N-1} \ell (y^\varepsilon_\mathbf{u}(k+1; \hat{x}), u(k)) \\
    x^\varepsilon_\mathbf{u}(0; \hat{x}) & = \hat{x} \label{eq:mpc:initialization} \\
    x^\varepsilon_\mathbf{u}(k+1; \hat{x}) & = \F_\mathrm{x}(x^\varepsilon_\mathbf{u}(k; \hat{x}), u(k)), 
    k \in [0:\karl{N-1}] \label{eq:mpc:x_dynamics}\\
    y^\varepsilon_\mathbf{u}(k; \hat{x}) & = [I_p, 0_{\karl{p \times (n-p)}}] x^\varepsilon_\mathbf{u}(k; \hat{x}), 
    k \in [0:N] \label{eq:mpc:y_dynamics}
\end{align}
\end{subequations}
Finally, the MPC algorithm is reported in Algorithm~\ref{alg:MPC}. 
\begin{algorithm}[htb]
    \caption{Data-driven nonlinear MPC}\label{alg:MPC}
    \raggedright
    \smallskip\hrule
    \smallskip
    {\it Input}: Horizon $N \in \N$, 
    surrogate~$\F_\mathrm{x}$, stage cost $\ell$, \\
    \phantom{{\it Input}:} input constraints~$\mathbb{U}$ 
    \smallskip\hrule
    \medskip
    \textit{Initialization}: Set $k = 0$, and initialize the state as\\
    \phantom{\textit{Initialization}:} $\hat{x} = [y(0:-\nu)^\top, u(-1:-\nu)^\top]^\top$\\[2mm]
    \noindent\textit{(1)} If $k > 0$, measure output $y(k)$ and set\\
    \phantom{\textit{(1)}} $\hat{x}=x(k) = [y(k:k-\nu+1)^\top, u(k - 1: k - \nu +1)^\top]^\top$\\[1mm]
    \noindent\textit{(2)} Solve the optimal control problem~
    \begin{align}\label{eq:OCP}\tag{OCP}
    \begin{split}
        \V_N(\hat{x}) \coloneqq \min_{\mathbf{u}} 
        \quad & \J(\hat{x}, \mathbf{u}) \\
        \text{s.t.} \quad& \text{\eqref{eq:mpc:initialization}-\eqref{eq:mpc:x_dynamics}-\eqref{eq:mpc:y_dynamics}} \\
        & \karl{\mathbf{u} = \{ u(i) \}_{i=0}^{N-1} \subset 
        \mathbb{U}} 
    \end{split}
    \end{align}
    \hspace*{5mm} to obtain optimal control sequence $\karl{\mathbf{u}^\star =} \{u^\star(i)\}_{i = 0}^{N - 1}$ \\[1mm]
    \noindent\textit{(3)} Apply the MPC feedback law~$\mu_N^\varepsilon(\hat{x}) = u^\star({0})$ to the\\
    \hspace*{5mm} plant to generate the closed loop
    \begin{align*}
        y(k + 1) = F_\mathrm{y}(x(k), \mu_N^\varepsilon(x(k))),
    \end{align*}
    \hspace*{5mm} increment $k = k + 1$, and go to Step~(1).
    \smallskip\hrule
\end{algorithm}

\noindent For the closed-loop analysis, we also introduce the nominal cost function~$J_N$, and the nominal (optimal) value function~$V_N$, that are defined analogously to $\J_N$ and $\V_N$, but using the \karl{actual 
system dynamics~$F_\mathrm{x}$ instead of the surrogate~$\F_\mathrm{x}$ in~\eqref{eq:mpc:x_dynamics}. 
In case we have a linear surrogate model $\F_{\mathrm{x}}$, \eqref{eq:OCP} is a computationally efficient convex optimization problem, assuming also $\mathbb{U}$ is convex. In general, we consider a nonlinear true dynamics $F_{\mathrm{x}}$ and hence a nonlinear surrogate dynamics $\F_{\mathrm{x}}$ and hence \eqref{eq:OCP} is a nonlinear program, as standard in nonlinear MPC~\cite{RawlMayn17}}.

\section{Stability analysis} \label{sec:stability}

\karl{Our goal is to prove exponential stability of the data-driven MPC closed-loop system. To this end, we require the following properties of the model~$\F_\mathrm{y}$},
see, e.g., \cite{STRASSER2026112732}.
\begin{assumption} \label{asm:model:properties}
    For every $\varepsilon \in (0,\bar{\varepsilon}]$, \karl{$\bar{\varepsilon} > 0$}, let the surrogate model~\eqref{eq:NARX:sur} satisfy
    \begin{enumerate}
        \item \emph{proportional} error bounds
        \begin{equation}\label{eq:ass:error_bound}
            \|F_\mathrm{y}(x, u) - \F_\mathrm{y}(x, u)\| \leq c_x^\varepsilon\|x\| + c_u^\varepsilon\|u\|
        \end{equation} 
        for all $x \in \Omega$, $u \in \mathbb{U}$ with parameters $c_x^\varepsilon$and $c_u^\varepsilon$ satisfying $\lim_{\varepsilon \searrow 0} \max \{c_x^\varepsilon, c_u^\varepsilon \} = 0$.
        
        \item uniform Lipschitz continuity in the first argument on~$\Omega$, i.e., there exists $\bar{L}> 0$ such that, for every $\varepsilon \in (0,\bar{\varepsilon}]$, there is a Lipschitz constant $\LF$ with $\LF \leq \bar{L}$ satisfying, for all $x,x' \in \Omega$, $u \in \mathbb{U}$,
        \begin{align} \label{eq:ass:Lipschitz}
            \|\F_\mathrm{y}(x,u) - \F_\mathrm{y}(x',u)\| \leq \LF\|x-x'\|
        \end{align}
    \end{enumerate}
\end{assumption}
\karl{Assumption~\ref{asm:model:properties} can be rigorously verified for, e.g., kernel-based surrogate models as shown in Section~\ref{sec:kernel}}.

To derive the stability results, it is important to notice that the considered stage cost~\eqref{eq:stagecosts} only penalizes the system output, and 
is therefore only positive semi-definite in the state~$x$. 
Hence, the standard stability results for positive definite \karl{stage cost, 
cf.~\cite{grune2017nonlinear}}, cannot be applied directly. Instead, the stability proof relies on cost detectability~\cite{GrimMess05,KohlZeil23}.
In view of the NARX structure of the system~\eqref{eq:NARX:sys} and model~\eqref{eq:NARX:sur}, the following result\karl{, which is a straightforward adaptation of \cite[Remark~3]{KohlZeil23}}, establishes this detectability \karl{condition}.
\begin{proposition}\label{prop:cost_detect}
    (Cost detectability)\\ 
    \karl{For weighting matrix $P=\mathrm{diag}(Q,\dots,\frac{1}{\nu}Q,R,\dots, \frac{2}{\nu}R)$, the quadratic storage function $W(x)=\|x\|^2_{P}$ is given by}
    \begin{align}\label{eq:storage_equality}
        W(x(t))\hspace*{-0.5mm}=\hspace*{-1mm}\sum_{k=0}^{\nu-1}\textstyle\frac{\nu-k}{\nu}\displaystyle\|y(t-k)\|_Q^2 + \hspace*{-1mm} \sum_{k=1}^{\nu-1}\textstyle\frac{\nu-k+1}{\nu}\displaystyle\|u(t-k)\|_R^2.  
    \end{align}
    \karl{For any $x \in {\Omega}, u \in \mathbb{U}$,
    $x^+ = F_\mathrm{x}(x,u)$ and $y^+ = F_\mathrm{y}(x,u)$,
    the following inequality holds}:
    \begin{align}
    \label{eq:storage_inequality}
    W(x^+)\leq \dfrac{\nu-1}{\nu}W(x)+\ell(y^+,u).    
    \end{align}
\end{proposition}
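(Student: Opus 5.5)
The argument is a direct computation from the block structure of $x$ and $P$. No earlier result is needed beyond the shift structure of $F_\mathrm{x}$ in~\eqref{eq:F_x}.

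\emph{Step 1: the storage identity.} Write $x(t)=[y(t:t-\nu+1)^\top,u(t-1:t-\nu+1)^\top]^\top$. Expanding $\|x(t)\|_P^2$ block by block gives:
\begin{itemize}
\item the $k$-th output block ($k=0,\dots,\nu-1$) is $y(t-k)$ and carries weight $\frac{\nu-k}{\nu}Q$, running from $Q$ down to $\frac1\nu Q$;
\item the $k$-th input block ($k=1,\dots,\nu-1$) is $u(t-k)$ and carries weight $\frac{\nu-k+1}{\nu}R$, running from $R$ down to $\frac2\nu R$.
\end{itemize}
This yields~\eqref{eq:storage_equality} immediately.

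\emph{Step 2: shift the sums.} By~\eqref{eq:F_x}, $x^+$ consists of $y^+,y(t),\dots,y(t-\nu+2)$ and $u,u(t-1),\dots,u(t-\nu+2)$. Applying~\eqref{eq:storage_equality} at time $t+1$ with $y(t+1)=y^+$ and $u(t)=u$, and reindexing $k\mapsto k+1$, gives
\begin{align*}
W(x^+)={}&\|y^+\|_Q^2+\sum_{k=0}^{\nu-2}\tfrac{\nu-k-1}{\nu}\|y(t-k)\|_Q^2\\
&+\|u\|_R^2+\sum_{k=1}^{\nu-2}\tfrac{\nu-k}{\nu}\|u(t-k)\|_R^2 .
\end{align*}
Its leading terms are exactly $\ell(y^+,u)$.

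\emph{Step 3: compare coefficients.} Compare the remaining terms of $W(x^+)$ with $\frac{\nu-1}{\nu}W(x)$ coefficient by coefficient.
\begin{itemize}
\item For $y(t-k)$, $k\le\nu-2$, we need $\frac{\nu-k-1}{\nu}\le\frac{\nu-1}{\nu}\cdot\frac{\nu-k}{\nu}$. This is equivalent to $\nu(\nu-k-1)\le(\nu-1)(\nu-k)$, i.e.\ $-\nu k\le -\nu-k+\nu k\cdot 0 \ldots$; expanding properly, $\nu^2-\nu k-\nu\le \nu^2-\nu k-\nu+k$, i.e.\ $0\le k$, which holds.
\item For $u(t-k)$, $k\le \nu-2$, we need $\nu(\nu-k)\le(\nu-1)(\nu-k+1)$, i.e.\ $\nu^2-\nu k\le \nu^2-\nu k+\nu-\nu+k-1$, i.e.\ $0\le k-1$, which holds since $k\ge1$.
\item The terms dropped from $W(x)$, namely $y(t-\nu+1)$ and $u(t-\nu+1)$, enter $\frac{\nu-1}{\nu}W(x)$ with nonnegative weights and are absent on the left, so they only help.
\end{itemize}
Summing these termwise inequalities gives~\eqref{eq:storage_inequality}.

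The main (and only real) obstacle is the index bookkeeping in Step 3, in particular the input weights and the edge cases. For $\nu=1$ the input part is empty and $W=\|y\|_Q^2$, so the claim reduces to $W(x^+)=\|y^+\|_Q^2\le\ell(y^+,u)$, which is trivial. I would verify this case separately so the sums remain well defined.
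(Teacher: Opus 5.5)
Your proof is correct, and it is exactly the direct block-by-block computation the paper relies on; the paper gives no proof of its own and calls the result a straightforward adaptation of \cite[Remark~3]{KohlZeil23}. The reindexing of $W(x^+)$ and the coefficient checks $\nu(\nu-k-1)\le(\nu-1)(\nu-k)\Leftrightarrow k\ge 0$ and $\nu(\nu-k)\le(\nu-1)(\nu-k+1)\Leftrightarrow k\ge 1$ are right, with equality for $y(t)$ and $u(t-1)$; in a write-up, just delete the garbled false start before ``expanding properly'' in the first bullet of Step~3.
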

\karl{This proposition verifies cost detectability with the storage function~$W$. In particular, recursive application of~\eqref{eq:storage_inequality} ensures that bounded cumulative costs ensure convergence of the state to zero. 
This connection will be crucial in the theoretical analysis to \karl{establish} 
stability of the MPC scheme, which minimizes the input-output cost $\ell$.
Note that Proposition~\ref{prop:cost_detect} holds for any system in NARX form of \eqref{eq:NARX:sys} including the surrogate models 
$\F_\mathrm{y}, \F_\mathrm{x}$}.

Since the MPC algorithm does not include terminal conditions, the stability proof utilizes the following cost controllability condition adapted from~\cite[Assumption~2]{KohlZeil23}. 
\begin{assumption}[Cost controllability] \label{asm:cost:controllability}
    System~\eqref{eq:NARX:sys} is \textit{cost controllable} with stage cost~\eqref{eq:stagecosts} on the set~$\Omega$, i.e., there exists a monotonically increasing bounded sequence $(B_{N})_{N \in \mathbb{N}_0}$ such that, for every $\hat{x} \in \Omega$ and every $N \in \mathbb{N}$, there exists a control sequence $\mathbf{u} \in \mathbb{U}^N$ satisfying the growth bound
    \begin{align}\label{eq:growthbound}
        V_{N}(\hat{x}) \leq J_{N}(\hat{x}, \mathbf{u}) \leq B_{N} \|\hat{x}\|^2.
    \end{align}
\end{assumption}
Cost controllability with a quadratic cost $\ell$ means that the system can be exponentially stabilized to the origin.
Compared to the positive-definite-cost case \cite{grune2017nonlinear}, we cannot have only the stage cost $\ell$ on the right hand side, see \cite{KohlZeil23} for an in-depth discussion.

In the following, we show that, if the system under control is cost controllable, then the same property is preserved by the surrogate model, and vice versa. This proposition follows \karl{a similar
reasoning like} 
\cite[Proposition 1]{schimperna2025data}. 
\karl{However, since the stage cost is only semi-definite, it cannot be used to bound the state prediction error, and the cost detectability function $W$ has to be used instead}.

\begin{proposition}\label{prop:cost:controllability}
    Let Assumptions~\ref{asm:model:properties} and~\ref{asm:cost:controllability} hold.
    Then, for given $\bar{N}$, the growth bound~\eqref{eq:growthbound} is satisfied for the surrogate model~\eqref{eq:NARX:sur} on~$\Omega$ for all $N \in [1:\bar{N}]$ uniformly in $\varepsilon$, i.e., there exists a monotonically increasing sequence $(B_{N}^\varepsilon)_{N \in [1:\bar{N}]}$, parametrized in~$\varepsilon$, such that, for each pair ${(\hat{x},N) \in \Omega 
    \times [1:\bar{N}]}$, there exists $\mathbf{u} \in \mathbb{U}^N$ satisfying
    \begin{align} \label{eq:growthbound:surrogate}
        \V_{N}(\hat{x}) \leq \J_{N}(\hat{x}, \mathbf{u}) \leq \B_{N} \|\hat{x}\|^2. 
    \end{align}
    Moreover, $\lim_{\varepsilon \searrow 0} \B_{N} = B_{N}$ for all $N \in [1:\bar{N}]$. 
    The statement holds also upon switching the roles of $F_\mathrm{y}$ and $\F_\mathrm{y}$, i.e., if the \karl{growth bound}
    ~\eqref{eq:growthbound} holds for the surrogate model $\F_\mathrm{y}$, then \karl{its counterpart}
    ~\eqref{eq:growthbound:surrogate} holds for the original system dynamics~$F_\mathrm{y}$ for all $N \in [1 : \bar{N}]$.
\end{proposition}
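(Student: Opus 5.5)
Fix $\hat{x}\in\Omega$ and $N\in[1:\bar{N}]$. Take the input sequence $\mathbf{u}$ from Assumption~\ref{asm:cost:controllability}, so that $J_N(\hat{x},\mathbf{u})\le B_N\|\hat{x}\|^2$. Apply this same $\mathbf{u}$ to the surrogate. The task is to bound $\J_N(\hat{x},\mathbf{u})$ by $(B_N+\delta^\varepsilon_N)\|\hat{x}\|^2$ with $\delta^\varepsilon_N\to0$. We then set $\B_N:=\max_{j\le N}(B_j+\delta^\varepsilon_j)$, which makes the sequence monotone and still gives $\B_N\to B_N$ because $B_N$ is monotone.

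\textbf{Step 1 (error recursion).} Let $x(k)=x_\mathbf{u}(k;\hat{x})$ and $x^\varepsilon(k)=x^\varepsilon_\mathbf{u}(k;\hat{x})$, and set $e(k)=x^\varepsilon(k)-x(k)$. Only the first block of $\F_\mathrm{x}-F_\mathrm{x}$ is nonzero, and the shift parts coincide. Combining this with \eqref{eq:ass:error_bound} and \eqref{eq:ass:Lipschitz} gives
\[
\|e(k+1)\|\le (1+\bar L)\|e(k)\| + c_x^\varepsilon\|x(k)\| + c_u^\varepsilon\|u(k)\|, \qquad e(0)=0 .
\]
Here the factor $1$ comes from the shift blocks, and since $\Omega$ is invariant under both dynamics all points stay in $\Omega$. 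Iterating,
\[
\|e(k)\|\le \sum_{j<k}(1+\bar L)^{k-1-j}\big(c_x^\varepsilon\|x(j)\|+c_u^\varepsilon\|u(j)\|\big).
\]

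\textbf{Step 2 (bound the true trajectory by $\|\hat{x}\|$).} This is the key point where the stage cost alone does not suffice. By Cauchy--Schwarz the sum above is controlled by $\sum_{j<N}\|x(j)\|^2$ and $\sum_{j<N}\|u(j)\|^2$. The input part is at most $\underline{\lambda}(R)^{-1}J_N(\hat{x},\mathbf{u})\le \underline{\lambda}(R)^{-1}B_N\|\hat{x}\|^2$. For the states, apply Proposition~\ref{prop:cost_detect} to the true system and iterate \eqref{eq:storage_inequality}:
\[
W(x(j))\le W(\hat{x})+\sum_{i<j}\ell(y(i+1),u(i))\le \big(\overline{\lambda}(P)+B_N\big)\|\hat{x}\|^2 .
\]
Hence $\|x(j)\|^2\le \underline{\lambda}(P)^{-1}(\overline{\lambda}(P)+B_N)\|\hat{x}\|^2$. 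With $B_N\le \sup_N B_N<\infty$ and $N\le\bar N$, this yields $\|e(k)\|\le \gamma^\varepsilon\|\hat{x}\|$ for all $k\le N$, where $\gamma^\varepsilon\to0$ is proportional to $\max\{c_x^\varepsilon,c_u^\varepsilon\}$ with constants depending on $\bar N$, $\bar L$, $P$, $R$ and $\sup B_N$.

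\textbf{Step 3 (compare the costs).} The outputs satisfy $\|y^\varepsilon(k)-y(k)\|\le\|e(k)\|$. Using $\|a+b\|_Q^2\le(1+\eta)\|a\|_Q^2+(1+1/\eta)\|b\|_Q^2$, or simply expanding and bounding the cross term by Cauchy--Schwarz with $\|y(k)\|\le\|x(k)\|\le c\|\hat{x}\|$, we get
\[
\J_N(\hat{x},\mathbf{u})\le J_N(\hat{x},\mathbf{u})+\overline{\lambda}(Q)\sum_{k}\big(2\|y(k)\|\|e(k)\|+\|e(k)\|^2\big)\le \big(B_N+\delta_N^\varepsilon\big)\|\hat{x}\|^2,
\]
with $\delta^\varepsilon_N=O(\gamma^\varepsilon+(\gamma^\varepsilon)^2)\to0$. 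The first inequality in \eqref{eq:growthbound:surrogate} holds by the definition of $\V_N$.

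\textbf{Converse direction.} The argument is symmetric. Step 1 now uses the Lipschitz constant $L_{F_\mathrm{y}}$ of the true system in place of $\bar L$. Step 2 applies Proposition~\ref{prop:cost_detect} to the surrogate, as noted in the paper, together with the growth bound assumed for the surrogate. The error bound \eqref{eq:ass:error_bound} is symmetric in $F_\mathrm{y}$ and $\F_\mathrm{y}$, so it applies unchanged.

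The main obstacle is Step 2. Bounding the true states along the trajectory requires the detectability storage function $W$, because $\ell$ only sees outputs. One must also check that all constants stay uniform in $\varepsilon$. This holds because the Lipschitz constants are uniformly bounded by Assumption~\ref{asm:model:properties} and $B_N$ is bounded.
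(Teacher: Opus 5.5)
Your proof is correct and follows essentially the same route as the paper. You apply the input from Assumption~\ref{asm:cost:controllability} to the surrogate, propagate the prediction error through a Lipschitz recursion driven by the proportional bound, control the true states through the storage function $W$ of Proposition~\ref{prop:cost_detect}, and then expand the quadratic output cost.

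The differences are only in bookkeeping. You split the one-step error using the surrogate's Lipschitz constant and the error bound along the true trajectory, whereas the paper uses the mirror-image split with $L_{F_\mathrm{x}}+c_x^\varepsilon$. You also bound each term pointwise by $\|\hat{x}\|$ instead of exchanging summations. Your use of $B_N$ for partial sums of $J_N(\hat{x},\mathbf{u})$, together with the explicit monotonization $\max_{j\le N}$, is if anything slightly more careful than the paper, which bounds those partial sums by $B_k$ and $B_{N-j}$.
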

\begin{proof}    
    \ifbool{arxiv}{For sake of compactness, \karl{in the proof} we omit the dependence of all the state and output sequences from the initial condition $\hat{x}$.
    \karl{The aim of the proof is to study the difference between $J_N^\varepsilon(\hat{x}, \mathbf{u})$ and $J_N(\hat{x}, \mathbf{u})$ relying on bounds on $e_{\mathrm{y}}(k) := \| y_{\mathbf{u}}^\varepsilon(k) - y_{\mathbf{u}}(k) \|$.}
    \karl{By standard norm inequalities,} we have that
    \begin{align*}
         \J_N(\hat{x}, \mathbf{u}) &\leq J_N(\hat{x}, \mathbf{u}) \\
         &\hspace*{-2.5mm}+\hspace*{-0.5mm} \bar{\lambda}(Q) \Big[ \sum_{k = 0}^{N-1} e_\mathrm{y}(k+1)^2 \hspace*{-1mm}+ 2 e_\mathrm{y}(k+1)\|y_{\mathbf{u}}(k+1)\| \Big].
    \end{align*}
    Next, we derive bounds on $e_\mathrm{y}^2(k+1)$ and $e_\mathrm{y}(k+1)\|y_{\mathbf{u}}(k+1)\|$. 
    \karl{By} definition $e_\mathrm{y}(k)  \leq e_\mathrm{x}(k) := \| x_{\mathbf{u}}^\varepsilon(k) - x_{\mathbf{u}}(k) \|$ and 
    $\|F_\mathrm{x}(x, u) - \F_\mathrm{x}(x, u)\| = \|F_\mathrm{y}(x, u) - \F_\mathrm{y}(x, u)\|$, since the two functions only differ for the first $p$ components.
    Analogously to 
    \karl{the proof of \cite[Prop.~1]{schimperna2025data}}, we have that
    \begin{align}
        & e_\mathrm{y}(k + 1) \leq e_\mathrm{x}(k + 1) \nonumber \\
        = & \| \F_\mathrm{x}(x_{\mathbf{u}}^\varepsilon(k), u(k)) \pm F_\mathrm{x}(x_{\mathbf{u}}^\varepsilon(k), u(k)) - F_\mathrm{x}(x_{\mathbf{u}}(k), u(k))\| \nonumber \\
        \leq & c_x^\varepsilon \| x_{\mathbf{u}}^\varepsilon(k) \pm x_{\mathbf{u}}(k)\| + c_u^\varepsilon\|u(k)\| + L_{F_\mathrm{x}}\| x_{\mathbf{u}}^\varepsilon(k) - x_{\mathbf{u}}(k) \| \nonumber \\
        \leq & \bar{c}(\|x_{\mathbf{u}}(k)\| + \|u(k)\|) + (L_{F_\mathrm{x}} + c_x^\varepsilon) e_\mathrm{x}(k)\label{eq:cost:controllability:proof1:modified} \\
        \leq & \bar{c} \sum\nolimits_{i=0}^k (L_{F_\mathrm{x}}+c_x^\varepsilon)^{k-i} (\|x_{\mathbf{u}}(i)\| + \|u(i)\|) \label{eq:cost:controllability:proof2:modified}
    \end{align}
    with $\bar{c}:= \max\{c_x^\varepsilon, c_u^\varepsilon\}$. Let $d \coloneqq L_{F_\mathrm{x}} + c_x^\varepsilon$ and $\ell_\mathrm{y}(i) \coloneqq \ell(y_\mathbf{u}(i+1), u(i))$.
    Using Inequality~\eqref{eq:cost:controllability:proof1:modified} and the fact that $(a + b)^2 \leq 2 a^2 + 2 b^2$, we get 
    \begin{align}
        & e_\mathrm{y}(k+1)^2 \leq 
        4 \bar{c}^2 (\|x_{\mathbf{u}}(k)\|^2 + \|u(k)\|^2) + 2 d^2 e_\mathrm{x}(k)^2 \nonumber \\
        \leq \;& 4 \bar{c}^2  (\|x_\mathbf{u}(k)\|^2 + \underline{\lambda}(R)^{-1}\ell_\mathrm{y}(k)) + 2 d^2 e_\mathrm{x}(k)^2  \nonumber \\
        \leq \;& 4 \bar{c}^2  \sum_{i = 0}^{k} (2 d^2)^{k - i} (\|x_\mathbf{u}(i)\|^2 + \underline{\lambda}(R)^{-1}\ell_\mathrm{y}(i))
        \nonumber \\
        = \; & 4 \bar{c}^2 \sum_{j = 0}^{k} (2 d^2)^{j} (\|x_\mathbf{u}(k - j)\|^2 + \underline{\lambda}(R)^{-1}\ell_\mathrm{y}(k - j)). \label{eq:bound:ey:squared} 
    \end{align}
    Analogously, leveraging Inequality~\eqref{eq:cost:controllability:proof2:modified} and the fact that $2 \|a\| \|b\| \leq \|a\|^2 + \|b\|^2$ yields
    \begin{align}
        & e_\mathrm{y}(k) \|y_{\mathbf{u}}(k)\| \leq  e_\mathrm{x}(k)\|y_{\mathbf{u}}(k)\| \nonumber \\
        \leq & \bar{c} \sum_{i=0}^{k - 1} d^{k-1-i} \Big( \overbrace{\|x_{\mathbf{u}}(i)\| \|y_{\mathbf{u}}(k)\| + \|u(i)\| \|y_{\mathbf{u}}(k)\|}^{\leq \frac 12 ( \|x_{\mathbf{u}}(i)\|^2 + \|u(i)\|^2 + 2 \|y_{\mathbf{u}}(k)\|^2 )} \Big) \nonumber \\
        \leq & \frac{\bar{c}}{2} \sum_{i=0}^{k - 1} d^{k - 1 - i} \Big( \|x_{\mathbf{u}}(i)\|^2 + \underline{\lambda}(R)^{-1} \ell_\mathrm{y}(i) + 2 \|y_{\mathbf{u}}(k)\|^2 \Big) \nonumber \\
        = & \frac{\bar{c}}{2} \sum_{j = 0}^{k - 1} d^{j} \Big( \|x_{\mathbf{u}}(k - 1 -j)\|^2 + \underline{\lambda}(R)^{-1} \ell_\mathrm{y}(k - 1 -j) \nonumber \\
        &+ 2 \|y_{\mathbf{u}}(k)\|^2 \Big). \label{eq:bound:ey:times:y}
    \end{align}
    \karl{The second summand in \eqref{eq:bound:ey:squared} and the second and third summands in \eqref{eq:bound:ey:times:y} consist of terms that are included in the MPC cost function, and, thus, can be bounded by exploiting the cost controllability condition \eqref{eq:growthbound}, similarly to the proof of \cite[Prop.~1]{schimperna2025data}.}
    \karl{Instead, the term $\|x_{\mathbf{u}}(k- 1-j)\|^2$}
    cannot be included in $\ell_\mathrm{y}$ in view of the positive semi-definiteness of the cost.
    Hence, 
    \karl{this term is bounded exploiting the storage function $W$ introduced in Proposition~\ref{prop:cost_detect}}.
    In particular, for each $k \in [1:N-1]$ we can apply \eqref{eq:storage_equality} iteratively and use that $\frac{\nu - 1}{\nu} < 1$ and the assumed cost controllability to obtain
    \begin{align*}
        \|x_\mathbf{u}(k)\|_{P}^2 & \leq \frac{\nu - 1}{\nu} \|x_\mathbf{u}(k-1)\|_{P}^2 + \ell_\mathrm{y}(k-1) \\
        & \leq \sum\nolimits_{i=0}^{k-1} \ell_\mathrm{y}(i) + \|\hat{x}\|_{P}^2 \leq B_{k} \|\hat{x}\|^2 + \|\hat{x}\|_{P}^2,
    \end{align*}
    which, in view of the positive definiteness of $P$, implies 
    \begin{equation} \label{eq:bound:xu:squared}
        \|x_\mathbf{u}(k)\|^2 \leq \underline{\lambda}(P)^{-1} (B_k + \overline{\lambda}(P))\|\hat{x}\|^2.
    \end{equation}
    
    \karl{
    We can now study term $\sum_{k=0}^{N-1} e_\mathrm{y}(k)^2$. In view of \eqref{eq:bound:ey:squared} and \eqref{eq:bound:xu:squared}, we have that
    \begin{align*}
        & \sum_{k = 0}^{N-1} e_\mathrm{y}(k+1)^2 \\
        \leq& 4\bar{c}^{2} \sum_{k = 0}^{N-1} \sum_{j = 0}^{k} (2 d^2)^{j} \underline{\lambda}(P)^{-1} (B_{k-j} + \overline{\lambda}(P))\|\hat{x}\|^2 \\
        & +\sum_{k = 0}^{N - 1} \sum_{j = 0}^{k} \frac{(2 d^2)^{j}}{\underline{\lambda}(R)}\ell_\mathrm{y}(k - j) 
    \end{align*}
    Then, noting that the summation $\sum_{k = 0}^{N - 1} \sum_{j = 0}^{k}$ is equivalent to $\sum_{0 \leq j < k \leq N - 1} = \sum_{j = 0}^{N - 1} \sum_{k = j}^{N-1}$ and invoking the assumed cost controllability~\eqref{eq:growthbound}, we can bound the second summand in the previous inequality as follows:
    \begin{align*}
        & \sum_{k = 0}^{N - 1} \sum_{j = 0}^{k} (2 d^2)^{j}\frac{1}{\underline{\lambda}(R)}\ell_\mathrm{y}(k - j)  \\
        = & \sum_{j = 0}^{N - 1} (2 d^2)^{j}\underbrace{\sum_{k = j}^{N-1}\frac{1}{\underline{\lambda}(R)}\ell_\mathrm{y}(k - j)}_{= \underline{\lambda}(R)^{-1}\sum_{r = 0}^{N - j - 1}\ell_\mathrm{y}(r)} \leq \sum_{j = 0}^{N - 2} (2 d^2)^{j}\frac{B_{N - j}}{\underline{\lambda}(R)} \|\hat{x}\|^2.
    \end{align*}
    }
    \karl{
    We proceed similarly for $\sum_{k = 0}^{N - 1} e_\mathrm{y}(k+1) \|y_{\mathbf{u}}(k+1)\|$, starting from \eqref{eq:bound:ey:times:y} and applying \eqref{eq:bound:xu:squared}: 
    \begin{eqnarray*}
    & & \sum_{k = 0}^{N - 1} e_\mathrm{y}(k+1) \|y_{\mathbf{u}}(k+1)\| \\
    & \leq & \frac{\bar{c}}{2} \sum_{k = 0}^{N - 1} \sum_{j = 0}^{k} d^{j} \underline{\lambda}(P)^{-1} (B_{k-j} + \overline{\lambda}(P))\|\hat{x}\|^2 \\
    & & + \frac{\bar{c}}{2} \sum_{k = 0}^{N - 1} \sum_{j = 0}^{k} d^{j}\Big( \underline{\lambda}(R)^{-1} \ell(k- j) + 2 \|y_{\mathbf{u}}(k+1)\|^2 \Big).
    \end{eqnarray*}
    The terms in the last summation can be upper bounded by
    \begin{align*}
         \sum_{k = 0}^{N - 1} \sum_{j = 0}^{k} \frac{d^{j}}{\underline{\lambda}(R)} \ell_\mathrm{y}(k - j) &\leq \underline{\lambda}(R)^{-1} \sum_{j = 0}^{N - 1} d^{j}\underbrace{\sum_{k = j}^{N-1}\ell_\mathrm{y}(k - j)}_{\leq \sum_{r = 0}^{N-j-1} \ell_\mathrm{y}(r)} \\
         &\leq \underline{\lambda}(R)^{-1} \sum_{j = 0}^{N - 1} d^{j} B_{N-j}\|\hat{x}\|^2
     \end{align*}
     and by
    \begin{align*}
       & \sum_{k = 0}^{N - 1}\sum_{j = 0}^{k} d^{j} \|y_{\mathbf{u}}(k+1)\|^2 
       \leq \sum_{k = 0}^{N - 1}\sum_{j = 0}^{k} d^{j} \underline{\lambda}(Q)^{-1}\ell_\mathrm{y}(k) \\
       &\qquad = \underline{\lambda}(Q)^{-1}\sum_{k = 0}^{N - 1} \ell_\mathrm{y}(k) \sum_{j = 0}^{k-1} d^{j}\\
       &\qquad \leq \underline{\lambda}(Q)^{-1} \Bigg(\max_{k \in [0:N-1]}\sum_{j = 0}^{k-1}d^j \Bigg) \underbrace{\sum_{k = 0}^{N - 1}\ell_\mathrm{y}(k)}_{\leq B_N \|\hat{x}\|^2}. 
    \end{align*}
     }
    \karl{Combining the previous estimates leads to}
    \begin{align}\label{eq:B_N_epsilon}
        \J_N(\hat{x}, \mathbf{u}) &\leq \left(B_N + 4\bar{\lambda}(Q)\bar{c}^2 c_N + 2\bar{\lambda}(Q)\bar{c} \bar{c}_N \right)\|\hat{x}\|^2 \nonumber \\
        & =: B_N^\varepsilon \|\hat{x}\|^2,
    \end{align}
    where $c_N$ is given by
    \begin{align*}
        \sum_{k = 0}^{N - 1} \sum_{j = 0}^{k} (2 d^2)^{j} \underline{\lambda}(P)^{-1} (B_{k-j} + \overline{\lambda}(P)) + \sum_{j = 0}^{N - 1} (2 d^2)^{j} \frac{B_{N - j}}{\underline{\lambda}(R)} 
    \end{align*}
    and 
    \begin{align*}
        \bar{c}_N & \coloneqq \frac{1}{2} \Bigg(\sum_{k = 0}^{N - 1}\sum_{j = 0}^{k} d^{j} \underline{\lambda}(P)^{-1} (B_{k-j} + \overline{\lambda}(P) ) \\
    & + \sum_{j = 0}^{N-1}\underline{\lambda}(R)^{-1} d^{j} B_{N-j} + \underline{\lambda}(Q)^{-1} \hspace{-0.15cm}\max_{k \in [0:N-1]}\sum_{j = 0}^{k-1}d^jB_N\Bigg).
    \end{align*}
    In~\eqref{eq:B_N_epsilon}, for each $N \in [1:\bar{N}]$, $B_{N}^\varepsilon \rightarrow B_{N}$ for $\bar{c} := \max\{c_x^\varepsilon,c_u^\varepsilon\} \searrow 0$ as claimed.
    The symmetry of the statement can be proved with the same arguments of \cite{schimperna2025data}.}%
    {\karl{Since the proof follows similar steps as \cite[Prop.~1]{schimperna2025data}, we only provide a sketch and refer to the extended arXiv version, see \url{arxiv.org/abs/2603.16808}, 
    of our paper for a detailed proof.
    We leverage standard norm inequalities, the growth condition~\eqref{eq:growthbound} and the proportional error bound
    ~\eqref{eq:ass:error_bound} to bound the difference between $J_N^\varepsilon(\hat{x}, \mathbf{u})$ and $J_N(\hat{x}, \mathbf{u})$. The main technical extension relies on the storage function~$W$ from Proposition~\ref{prop:cost_detect}, which is applied iteratively to \karl{derive bounds on} the state~$\|x_\mathbf{u}(k)\|^2$, $k \in \{0, \dots, N - 1\}$, and enables us to deduce 
    an explicit formula for
    $B_N^\varepsilon$ in \eqref{eq:growthbound:surrogate}, where $B_N^\varepsilon \to B_n$ as $\varepsilon \to 0$. 
    }}
\end{proof}

Since the considered stage cost is positive semi definite, the stability proof needs to consider 
\begin{equation}\label{eq:Lyapunov:candidate}
    Y^\varepsilon_N \coloneqq V^\varepsilon_N+W
\end{equation}
as candidate Lyapunov function and cannot rely on the optimal value function~$\V_N$ only, see~\cite{GrimMess05}. \karl{Consequentely, \cite[Thm.1]{schimperna2025data} cannot be applied directly.
Hence, we first state the following theorem as an auxiliary result, which is a direct adaptation of \cite[Thm.~3]{KohlZeil23} to our setting. Theorem~\ref{thm:stability_nominal} establishes a relaxed Lyapunov inequality for the surrogate model~$\F_y$ assuming that a condition on the interplay of prediction horizon~$N$ and approximation accuracy holds. The verifyability of that condition will be the key challenge in showing our main result, i.e., Theorem~\ref{thm:AS}}. 
\begin{theorem}
\label{thm:stability_nominal}
    \karl{Let the growth bound~\eqref{eq:growthbound:surrogate} hold for the \eqref{eq:OCP} using the surrogate model~$\F_y$. Further, let the prediction horizon satisfy the inequality 
    \begin{align}\label{eq:N:choice}
     N>\underline{N}=1+\dfrac{\log(\bar{\gamma}^\varepsilon)-\log(\frac{1}{\nu})}{\log(1+\bar{\gamma}^\varepsilon)-\log(\bar{\gamma}^\varepsilon+\eta)}   
    \end{align}
    with 
    $\bar{\gamma}^\varepsilon = \frac {\max_N \B_N}{\sigma_{\min}(P)}$, $\eta=\frac{\nu-1}{\nu}$}. 
    Then, there exists $\alpha_N>0$ such that for all $x \in \Omega$ \karl{and $\Y_N$ given by~\eqref{eq:Lyapunov:candidate}}
    \begin{subequations}\label{eq:Lyap_decrease_nominal}
        \begin{align}
            W(x)\leq \Y_N(x)\leq (\bar{\gamma}+1)W(x),\\
            \Y_N(\F_\mathrm{x}(x,u)) \leq \Y_N(x) -\frac{\alpha_N}{\nu} \cdot W(x).
    \end{align}
    \end{subequations}
\end{theorem}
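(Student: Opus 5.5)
The plan is to follow the relaxed dynamic programming argument of \cite[Thm.~3]{KohlZeil23}, carried out entirely for the surrogate dynamics~$\F_\mathrm{x}$. Two facts about the surrogate are used throughout. First, Proposition~\ref{prop:cost_detect} applies to~$\F_\mathrm{x}$ as well. Second, we need $W(x)\ge \sigma_{\min}(P)\|x\|^2$.

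\textbf{Sandwich bounds.} The lower bound $W\le \Y_N$ is immediate from $\V_N\ge 0$. For the upper bound, the growth bound~\eqref{eq:growthbound:surrogate} gives, for every horizon $M\le N$,
\begin{align*}
\V_M(x)\le \B_M\|x\|^2\le \frac{\max_N \B_N}{\sigma_{\min}(P)}\,W(x)=\bar{\gamma}^\varepsilon W(x).
\end{align*}
Hence $\Y_N\le(\bar{\gamma}^\varepsilon+1)W$. This bound on $\V_M$, valid uniformly in $M$, is the workhorse of the rest of the proof.

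\textbf{Decrease estimate.} Fix $x\in\Omega$ and let $\mathbf{u}^\star$ be optimal, with predicted states $x^\star(k)$ and stage costs $\ell_k:=\ell(y^\star(k+1),u^\star(k))$. Set $w_k:=W(x^\star(k))$ and $x^+=\F_\mathrm{x}(x,u^\star(0))=x^\star(1)$.
\begin{itemize}
\item For $k\in[1:N]$, concatenate the tail $u^\star(1),\dots,u^\star(k-1)$ with an optimal sequence for horizon $N-k+1$ from $x^\star(k)$. This yields
\begin{align*}
\V_N(x^+)\le \sum\nolimits_{i=1}^{k-1}\ell_i+\V_{N-k+1}(x^\star(k))\le \sum\nolimits_{i=1}^{k-1}\ell_i+\bar{\gamma}^\varepsilon w_k .
\end{align*}
\item Since $\V_N(x)\ge\sum_{i=0}^{k-1}\ell_i$, it follows that $\V_N(x^+)-\V_N(x)\le -\ell_0+\bar{\gamma}^\varepsilon w_k$.
\item Adding the cost detectability inequality~\eqref{eq:storage_inequality}, $W(x^+)\le \eta W(x)+\ell_0$, cancels the $\ell_0$ terms and gives
\begin{align*}
\Y_N(x^+)-\Y_N(x)\le -\tfrac1\nu W(x)+\bar{\gamma}^\varepsilon w_k\qquad\text{for every }k\in[1:N].
\end{align*}
\end{itemize}
It therefore suffices to show that, for a suitable $k$ (typically $k=N-1$ or $k=N$), we have $\bar{\gamma}^\varepsilon w_k\le \frac{1-\alpha_N}{\nu}W(x)$ with some $\alpha_N>0$.

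\textbf{Decay along the optimal prediction (main obstacle).} This is the step where the horizon condition~\eqref{eq:N:choice} must appear. I would introduce the auxiliary quantity
\begin{align*}
Z_k:=w_k+\sum\nolimits_{i=k}^{N-1}\ell_i .
\end{align*}
\begin{itemize}
\item By the principle of optimality, $\sum_{i\ge k}\ell_i=\V_{N-k}(x^\star(k))\le\bar{\gamma}^\varepsilon w_k$, so $Z_k\le(1+\bar{\gamma}^\varepsilon)w_k$.
\item By~\eqref{eq:storage_inequality}, $w_{k+1}\le\eta w_k+\ell_k$, so $Z_{k+1}\le Z_k-(1-\eta)w_k$.
\item Combining the two gives $Z_{k+1}\le \rho Z_k$ with $\rho=\frac{\bar{\gamma}^\varepsilon+\eta}{1+\bar{\gamma}^\varepsilon}<1$.
\item Hence $w_k\le Z_k\le \rho^{k}Z_0\le\rho^k(1+\bar{\gamma}^\varepsilon)W(x)$.
\end{itemize}
Inserting this into the decrease estimate, the requirement becomes roughly $\bar{\gamma}^\varepsilon\rho^{N-1}<\frac1\nu$ (up to the constant $1+\bar{\gamma}^\varepsilon$). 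After taking logarithms this is exactly the form~\eqref{eq:N:choice}.

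The delicate part is the bookkeeping of constants needed to match $\underline N$ precisely. This includes which index $k$ to use and whether $Z_0$ is bounded by $(1+\bar{\gamma}^\varepsilon)W$ or by something sharper. I would first try $k=N-1$ and bound $Z_0$ via $\V_N+W$, then adjust the counting to reproduce the stated formula. The resulting $\alpha_N:=1-\nu\bar{\gamma}^\varepsilon\rho^{N-1}(\cdot)$ is positive precisely when $N>\underline N$.
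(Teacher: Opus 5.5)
The paper gives no proof of its own. It obtains the theorem by instantiating \cite[Thm.~3]{KohlZeil23} with the detectability inequality of Proposition~\ref{prop:cost_detect} and the growth bound rewritten as $\V_M\le\bar{\gamma}^\varepsilon W$. Your overall route (relaxed dynamic programming with $\Y_N=\V_N+W$) is the intended one, and your sandwich bound, concatenation estimate and recursion $Z_{k+1}\le\rho Z_k$ are all correct. The gap is the reduction to a \emph{single} index $k$ with $\bar{\gamma}^\varepsilon w_k\le\frac{1-\alpha_N}{\nu}W(x)$. No amount of bookkeeping lets this reach the stated $\underline N$.

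Write $\gamma:=\bar{\gamma}^\varepsilon$. Your best choice is $k=N$, which gives $\gamma w_N\le\gamma\rho^{N}Z_0\le\gamma(\gamma+\eta)\rho^{N-1}W(x)$ and hence $\alpha_N=1-\nu\gamma(\gamma+\eta)\rho^{N-1}$; the choice $k=N-1$ is worse by the factor $\frac{1+\gamma}{\gamma+\eta}$. Positivity then requires $\nu\gamma(\gamma+\eta)\rho^{N-1}<1$. This is strictly stronger than $N>\underline N\Leftrightarrow\nu\gamma\rho^{N-1}<1$ whenever $\gamma>1/\nu$, which is exactly when $\underline N>1$. For example, with $\nu=2$, $\gamma=2$, $N=9$ the hypothesis holds ($\nu\gamma\rho^{8}\approx0.93$), but your $\alpha_9\approx-1.3$.

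A sharper estimate of one $w_k$ does not help either. Take $w_k=\rho^kW(x)$ for $k\le N-1$, $\ell_k=\gamma(1-\rho)w_k$ for $k\le N-2$, and $\ell_{N-1}=\gamma w_{N-1}$. These values satisfy every inequality you use (detectability with equality, and $\V_{N-k}(x^\star(k))=\gamma w_k$), yet give $\gamma w_N=\gamma(\gamma+\eta)\rho^{N-1}W(x)$. Analogous data make $w_{N-1}=(1+\gamma)\rho^{N-1}W(x)$. However, in the first data set the choice $k=1$, \emph{without} discarding $-\sum_{i=1}^{N-1}\ell_i$, already gives $\Y_N(x^+)\le\Y_N(x)-\frac{1}{\nu}W(x)$.

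The missing idea is to keep the term $-\sum_{i=k}^{N-1}\ell_i$ that you drop in the decrease estimate, and to take the minimum over $k$. Let $g_k:=\gamma w_k-\V_{N-k}(x^\star(k))\ge0$ for $k\in[0:N]$, so that $g_N=\gamma Z_N$. Your concatenation then gives exactly $\Y_N(x^+)\le Z_1+\min_{k\in[1:N]}g_k$. Since $w_k=(Z_k+g_k)/(1+\gamma)$, detectability sharpens to $Z_{k+1}\le\rho Z_k-(1-\rho)g_k$. If $g_k\ge m$ for all $k\in[1:N]$, iterating from $k=1$ yields $m/\gamma\le Z_N\le\rho^{N-1}Z_1-m(1-\rho^{N-1})$, i.e.\ $m\le cZ_1$ with $c:=\frac{\gamma\rho^{N-1}}{1+\gamma(1-\rho^{N-1})}$. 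Combining this with $Z_1\le\Y_N(x)-\frac{1}{\nu}W(x)$ and $\Y_N(x)\le(1+\gamma)W(x)$ gives
\[
\Y_N(x^+)-\Y_N(x)\le-\tfrac{1}{\nu}W(x)+c(\gamma+\eta)W(x),
\]
so $\alpha_N=1-\frac{\nu\gamma(\gamma+\eta)\rho^{N-1}}{1+\gamma(1-\rho^{N-1})}$. Because $\nu(\gamma+\eta)+1=\nu(1+\gamma)$, this is positive if and only if $\nu\gamma\rho^{N-1}<1$, which is exactly $N>\underline N$. The denominator $1+\gamma(1-\rho^{N-1})$ comes from the minimum over $k$, and no fixed-$k$ argument can produce it.
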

\karl{Condition~\eqref{eq:N:choice} holds for sufficiently large horizon~$N$ and implies $\alpha_N>0$.
Then, inequalities~\eqref{eq:Lyap_decrease_nominal} ensure that $\Y_N(x)$ is a Lyapunov function. 
Therein, 
$\alpha_N\in(0,1]$ is a suboptimality index that bounds the closed-loop performance compared the infinite-horizon optimal controller, cf.\ \cite[Thm.~1]{KohlZeil23}. One can show that $\alpha_N$ approaches one as the prediction horizon $N$ approaches infinity, 
i.e., can essentially recover the best possible performance using MPC}. 

In the following theorem we derive the main result of the paper, i.e., exponential stability of the data-driven MPC closed loop.  To do so, we rely on the error bounds of Assumption~\ref{asm:model:properties} to show that $\Y_N$ is also a Lyapunov function for the system~\eqref{eq:state:sys} under the control law $\mu_N^\varepsilon$, provided that~$\varepsilon$ is sufficiently small.
\begin{theorem}\label{thm:AS}
    Let the assumptions of Proposition~\ref{prop:cost:controllability} and of Theorem~\ref{thm:stability_nominal} hold.
    Then, there exists $\varepsilon_0 \in (0,\bar{\varepsilon}]$ such that the MPC controller of Algorithm~\ref{alg:MPC} ensures exponential stability of the origin for all $x(0)\in\Omega$ and for \karl{all} $\varepsilon \in (0,\varepsilon_0)$. 
\end{theorem}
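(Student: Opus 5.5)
We plan to show that $\Y_N=\V_N+W$ from~\eqref{eq:Lyapunov:candidate} is a Lyapunov function for the \emph{true} closed loop $x^+=F_\mathrm{x}(x,\mu_N^\varepsilon(x))$, with quadratic bounds in $\|x\|$. Exponential stability then follows by the standard argument.

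First, we collect the sandwich bounds. By Proposition~\ref{prop:cost:controllability} with $\bar N=N$, we have $\V_N(x)\le \B_N\|x\|^2$, and $\B_N\to B_N$ as $\varepsilon\searrow 0$. Hence for $\varepsilon\le\bar\varepsilon$ we get $\underline{\lambda}(P)\|x\|^2\le \Y_N(x)\le (\sup_{\varepsilon}\max_N\B_N+\overline{\lambda}(P))\|x\|^2$, with constants uniform in $\varepsilon$. We also fix $\varepsilon_1$ small enough that~\eqref{eq:N:choice} holds for all $\varepsilon\le\varepsilon_1$. This is possible because $\bar\gamma^\varepsilon\to\max_N B_N/\sigma_{\min}(P)$, so $\underline N$ converges to its nominal value. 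We choose $N$ strictly above that limit and work with a uniform $\alpha:=\inf_{\varepsilon\le\varepsilon_1}\alpha_N>0$. Theorem~\ref{thm:stability_nominal} then gives $\Y_N(\F_\mathrm{x}(x,\mu_N^\varepsilon(x)))\le \Y_N(x)-\frac{\alpha}{\nu}W(x)$.

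Second, we compare the true successor $x^+=F_\mathrm{x}(x,u)$ with the predicted successor $x^{\varepsilon,+}=\F_\mathrm{x}(x,u)$, where $u=\mu_N^\varepsilon(x)$. By~\eqref{eq:ass:error_bound},
\[
\|x^+-x^{\varepsilon,+}\|\le c_x^\varepsilon\|x\|+c_u^\varepsilon\|u\|.
\]
To bound $\|u\|$ by $\|x\|$, we use optimality and the growth bound: $\underline{\lambda}(R)\|u\|^2\le \ell(y^\varepsilon(1),u)\le\V_N(x)\le\B_N\|x\|^2$. Therefore $\|x^+-x^{\varepsilon,+}\|\le \bar c\,C_1\|x\|$, with $\bar c=\max\{c_x^\varepsilon,c_u^\varepsilon\}\to0$.

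The main obstacle is a continuity estimate for $\Y_N$ of the form
\[
\Y_N(z)-\Y_N(z')\le C_2(\|z\|+\|z'\|)\|z-z'\|,
\]
with $C_2$ independent of $\varepsilon$. For $W$ this is immediate, since $W$ is quadratic. For $\V_N$, we take the optimal input $\mathbf{u}'$ for $z'$ and use it from $z$, which gives $\V_N(z)\le \J_N(z,\mathbf{u}')$. We then bound $\J_N(z,\mathbf{u}')-\J_N(z',\mathbf{u}')$ by iterating the uniform Lipschitz bound~\eqref{eq:ass:Lipschitz}. The predicted trajectories then differ by at most $(\bar L+1)^k\|z-z'\|$, with the constant $\max\{\bar L,1\}$ also covering the shift components of $\F_\mathrm{x}$. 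The quadratic cost differences are then bounded via $\|a\|_Q^2-\|b\|_Q^2\le\overline{\lambda}(Q)\|a-b\|(\|a\|+\|b\|)$. The norms of the predicted outputs along $\mathbf{u}'$ are bounded by $\sqrt{\V_N(z')/\underline{\lambda}(Q)}$, plus the trajectory difference. This yields the claimed estimate, with $C_2$ depending only on $N$, $\bar L$, $Q$, $P$ and $\sup_\varepsilon\max_N\B_N$.

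Finally, we combine the two steps. Since $\|x^{\varepsilon,+}\|,\|x^+\|\le C_3\|x\|$ by Lipschitz continuity together with the input bound, we obtain
\[
\Y_N(x^+)\le \Y_N(x)-\tfrac{\alpha}{\nu}\underline{\lambda}(P)\|x\|^2+2C_2C_3\bar c\,C_1\|x\|^2.
\]
We choose $\varepsilon_0\le\varepsilon_1$ so that the perturbation term is at most half of the decrease term for all $\varepsilon<\varepsilon_0$. This gives $\Y_N(x^+)\le(1-\rho)\Y_N(x)$ with $\rho\in(0,1)$. Invariance of $\Omega$ keeps the trajectory in $\Omega$, and the quadratic sandwich bounds turn this decrease into $\|x(k)\|\le C\,(1-\rho)^{k/2}\|x(0)\|$.
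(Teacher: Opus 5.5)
Your proposal is correct and follows essentially the same route as the paper: $\Y_N=\V_N+W$ serves as Lyapunov function, the surrogate decrease comes from Theorem~\ref{thm:stability_nominal}, and $\Y_N(x^+)-\Y_N(\F_\mathrm{x}(\hat x,\mu_N^\varepsilon(\hat x)))$ is bounded by applying the optimal input of the predicted successor from the true successor, propagating the one-step mismatch $c_x^\varepsilon\|\hat x\|+c_u^\varepsilon\|\mu_N^\varepsilon(\hat x)\|$ with the Lipschitz bound, and controlling $\|\mu_N^\varepsilon(\hat x)\|$ via the growth bound. Two points of yours are refinements rather than a different argument: you make the uniformity of $\alpha_N$ in $\varepsilon$ explicit, and you bound the predicted outputs through $\V_N(z')\le\B_N\|z'\|^2$ instead of the paper's step $\V_N(\F_\mathrm{x}(\hat x,\mu_N^\varepsilon(\hat x)))\le\V_N(\hat x)$, which the decrease of $\Y_N$ only gives up to an extra $W(\hat x)$ term (this changes nothing but constants).
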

\begin{proof}
    In Theorem~\ref{thm:stability_nominal}, we have shown the relaxed Lyapunov inequality
    \begin{align}\label{eq:rel-lyap-ineq}
        \Y_N(\F_\mathrm{x}(\hat{x},u)) - \Y_N(\hat{x}) \leq -\frac{\alpha_N}{\nu} \cdot W(\hat{x})
    \end{align}
    for the surrogate model~$\F_\mathrm{x}$.
    Considering the real system dynamics $F_\mathrm{x}$, we have that
    \begin{eqnarray}
        & & \Y_N(F_\mathrm{x}(\hat{x}, \mu_N^\varepsilon(\hat{x})))  
        \pm \Y_N(\F_\mathrm{x}(\hat{x}, \mu_N^\varepsilon(\hat{x}))) \nonumber \\
        & \leq & 
        \Y_N(\hat{x}) -\frac{1}{\nu} \alpha_N W(\hat{x}) \nonumber\\
        & & + \V_N(F_x(\hat{x}, \mu_N^\varepsilon(\hat{x}))) - \V_N(\F_\mathrm{x}(\hat{x}, \mu_N^\varepsilon(\hat{x}))) \label{eq:VN:error} \\
        & & + W(F_\mathrm{x}(\hat{x}, \mu_N^\varepsilon(\hat{x}))) - W(\F_\mathrm{x}(\hat{x}, \mu_N^\varepsilon(\hat{x}))). \label{eq:W:error}
    \end{eqnarray}
    Next, we exploit the proportional bounds of Assumption~\ref{asm:model:properties} to derive bounds for \eqref{eq:VN:error} and \eqref{eq:W:error}. \karl{While the term in \eqref{eq:VN:error} is analogous to what is obtained in the proof of \cite[Thm. 1]{schimperna2025data}, the term in \eqref{eq:W:error} is due to the considered input-output setting.}
    To this end, we recall that $\F_\mathrm{x}(\hat{x}, \mu_N^\varepsilon(\hat{x})) = x_{\mathbf{u}^\star}^\varepsilon(1; \hat{x})$, where $\mathbf{u}^\star$ is the optimal solution of \eqref{eq:OCP} with initial state $\hat{x}$, and we denote the real successor state of the system by $x^+ := F_\mathrm{x}(\hat{x}, \mu_N^\varepsilon(\hat{x}))$. 
    $\mathbf{u}^\sharp = (u^\sharp(i))_{i=0}^{N-1}$ represents the solution of the MPC optimization problem initialized with $\tilde{x}^+ := x_{\mathbf{u}^\star}^\varepsilon(1; \hat{x})$, which is never computed in the practice but needed to define $\V_N(\F_\mathrm{x}(\hat{x}, \mu^\varepsilon_N(\hat{x})))$.
    Moreover, we define 
    $y_{\mathbf{u}^\sharp}^\varepsilon(i; \tilde{x}^+) \coloneqq [I_p 0_{(n-p)\times p}]x_{\mathbf{u}^\sharp}^\varepsilon(i; \tilde{x}^+)$
    and $y_{\mathbf{u}^\sharp}^\varepsilon(i; x^+) \coloneqq [I_p 0_{(n-p)\times p}] y_{\mathbf{u}^\sharp}^\varepsilon(i; x^+)$
    for all $i \in [0:N]$.
    Then, optimality of MPC yields 
    \begin{align}\label{eq:VN:error:bound}
        & \underbrace{\V_N(F_\mathrm{x}(\hat{x}, \mu_N^\varepsilon(\hat{x})))}_{= \V_N(x^+) \leq J_N^\varepsilon(x^+, \mathbf{u}^\sharp)} - \V_N(\F_\mathrm{x}(\hat{x}, \mu_N^\varepsilon(\hat{x}))) \\
        \leq &\hspace*{-0.5mm} \sum_{i=0}^{N-1} \hspace*{-0.5mm} \Big( \ell(y_{\mathbf{u}^\sharp}^\varepsilon(i+1; x^+), u^\sharp(i)) - \ell(y_{\mathbf{u}^\sharp}^\varepsilon(i+1; \tilde{x}^+), u^\sharp(i)) \Big). \nonumber
    \end{align}
    Consider now the $i$-th term of this summation
    \begin{align}
        & \ell(y_{ \mathbf{u}^\sharp}^\varepsilon(i+1; x^+), u^\sharp(i)) - \ell(y_{\mathbf{u}^\sharp}^\varepsilon(i+1; \tilde{x}^+), u^\sharp(i)) \nonumber\\
        = & \|y_{\mathbf{u}^\sharp}^\varepsilon(i+1; x^+) \|_Q^2 - \| y_{\mathbf{u}^\sharp}^\varepsilon(i+1; \tilde{x}^+) \|_Q^2 \nonumber\\
        \leq & \|Q\| \| y_{ \mathbf{u}^\sharp}^\varepsilon(i+1; x^+) - y_{\mathbf{u}^\sharp}^{\varepsilon}(i+1; \tilde{x}^+ ) \| \nonumber \\
        & \cdot \| y_{\mathbf{u}^\sharp}^\varepsilon (i+1; x^+) \hspace*{-0.05cm}+\hspace*{-0.05cm} y_{\mathbf{u}^\sharp}^\varepsilon(i+1; \tilde{x}^+) \mp y_{\mathbf{u}^\sharp}^\varepsilon(i+1;\tilde{x}^+) \| \nonumber\\
        \leq & 2 \|Q\| \| y_{\mathbf{u}^\sharp}^\varepsilon(i+1;\tilde{x}^+) \| \| y_{\mathbf{u}^\sharp}^\varepsilon(i+1; \tilde{x}^+) - y_{\mathbf{u}^\sharp}^\varepsilon(i+1; x^+) \| \nonumber \\
        & + \|Q\| \| y_{\mathbf{u}^\sharp}^\varepsilon(i+1; \tilde{x}^+) - y_{ \mathbf{u}^\sharp}^\varepsilon(i+1; x^+) \|^2, \label{eq:diff-ell}
    \end{align}
    where we have used $\|a\|_M^2 - \|b\|_M^2 = (a+b)^\top M (a-b) \leq \|M\| \|a - b\| \|a + b\|$.
    
    In the following, we derive upper bounds for the terms $\| y_{\mathbf{u}^\sharp}^\varepsilon(i+1; \tilde{x}^+) - y_{\mathbf{u}^\sharp}^\varepsilon(i+1; x^+) \|$ and $\| y_{\mathbf{u}^\sharp}^\varepsilon(i+1; \tilde{x}^+) \|$. 
    First, we consider the term $\| y_{\mathbf{u}^\sharp}^\varepsilon(i+1; \tilde{x}^+) - y_{\mathbf{u}^\sharp}^\varepsilon(i+1; x^+) \|$, and we leverage \eqref{eq:ass:error_bound} and \eqref{eq:ass:Lipschitz} of Assumption~\ref{asm:model:properties} to infer
    \begin{eqnarray*}
        & & \| y_{\mathbf{u}^\sharp}^\varepsilon(i+1; \tilde{x}^+) - y_{\mathbf{u}^\sharp}^\varepsilon(i+1; x^+) \| \\
        &\leq & \|x_{\mathbf{u}^\sharp}^\varepsilon(i+1; \tilde{x}^+) - x_{\mathbf{u}^\sharp}^\varepsilon(i+1; x^+)\| \\
        & \leq & L_{\F_\mathrm{x}}^{i+1} \| x_{\mathbf{u}^\sharp}^\varepsilon(0; \tilde{x}^+) - x_{\mathbf{u}^\sharp}^\varepsilon(0; x^+) \| \\
        & \leq & L_{\F_\mathrm{x}}^{i+1} (c_x^\varepsilon \| \hat{x} \| + c_u^\varepsilon \|\mu_N^\varepsilon(\hat{x})\|).
    \end{eqnarray*}
    Second, we consider the term $\| y_{\mathbf{u}^\sharp}^\varepsilon(i+1; \tilde{x}^+) \|$. For the growth bound 
    of $\F_\mathrm{x}$ derived in Proposition~\ref{prop:cost:controllability} and for the relaxed Lyapunov inequality \eqref{eq:rel-lyap-ineq}, we have that
    \begin{align*}
        V_N^\varepsilon(\F_\mathrm{x}(\hat{x}, \mu_N^{\varepsilon}(\hat{x}))) & = \sum\nolimits_{i=0}^{N-1} \ell(y_{\mathbf{u}^\sharp}^\varepsilon(i+1; \tilde{x}^+),u^\sharp(i)) \\
        & \leq V_N^\varepsilon(\hat{x}) \leq B^\varepsilon_N \|\hat{x}\|^2.
    \end{align*}
    Since, we have the inequality $\ell(y_{\mathbf{u}^\sharp}^\varepsilon(i+1; \tilde{x}^+),u^\sharp(i)) \geq \| y_{\mathbf{u}^\sharp}^\varepsilon(i+1; \tilde{x}^+) \|_Q^2$ for all $i \in [0:N-1]$,
    we also have $\| y_{\mathbf{u}^\sharp}^\varepsilon(i+1; \tilde{x}^+) \|_Q^2 \leq B^\varepsilon_N \| \hat{x} \|^2$.
    By exploiting standard inequalities of weighted squared norms and taking the square root, we have $\| y_{\mathbf{u}^\sharp}^\varepsilon(i+1; \tilde{x}^+) \| \leq \tilde{c} \| \hat{x} \|$, where $\tilde{c} \coloneqq \sqrt{B^\varepsilon_N/\underline{\lambda}(Q)}$.
    Then, substituting these inequalities in~\eqref{eq:diff-ell},
    we get
    \begin{eqnarray} \label{eq:ell:error}
         & & \ell(y_{\mathbf{u}^\sharp}^\varepsilon(i+1; x^+), u^\varepsilon(i)) - \ell(y_{\mathbf{u}^\sharp}^\varepsilon(i+1; \tilde{x}^+), u^\sharp(i)) \nonumber \\
         & \leq & 2 \|Q\| \tilde{c}  L_{\F_\mathrm{x}}^{i+1} (c_x^\varepsilon \|\hat{x}\|^2 + \underbrace{c_u^\varepsilon \|\hat{x}\| \|\mu_N^\varepsilon(\hat{x})\|}_{\leq \frac{1}{2} c_u^\varepsilon (\|\hat{x}\|^2 + \|\mu_N^\varepsilon(\hat{x})\|^2)}) \nonumber \\
         & & + 2 \|Q\|  L_{\F_\mathrm{x}}^{2(i+1)} \Big( (c_x^\varepsilon)^2 \|\hat{x}\|^2 + (c_u^\varepsilon)^2 \|\mu_N^\varepsilon(\hat{x})\|^2 \Big).
    \end{eqnarray}
    The terms in \eqref{eq:W:error} can be studied with a similar reasoning by noting that
    \begin{align*}
       & W(F_\mathrm{x}(\hat{x}, \mu_N^\varepsilon(\hat{x}))) - W(\F_\mathrm{x}(\hat{x}, \mu_N^\varepsilon(\hat{x}))) \\
       = & W([F_\mathrm{y}(\hat{x}, \mu_N^\varepsilon(\hat{x}))^\top\hspace*{-1mm}, ([I, 0]\hat{x})^\top\hspace*{-1mm}, \mu_N^\varepsilon(\hat{x})^\top\hspace*{-1mm}, ([0, I, 0] \hat{x})^\top]^\top) \\
       & - W([\F_\mathrm{y}(\hat{x}, \mu_N^\varepsilon(\hat{x}))^\top \hspace*{-1mm}, ([I, 0]\hat{x})^\top \hspace*{-1mm}, \mu_N^\varepsilon(\hat{x})^\top \hspace*{-1mm}, ([0, I, 0] \hat{x})^\top]^\top) \\
       =& \| F_\mathrm{y}(\hat{x}, \mu_N^\varepsilon(\hat{x})) \|_Q^2 - \|\F_\mathrm{y}(\hat{x}, \mu_N^\varepsilon(\hat{x}))\|_Q^2 \\
       = & \|y_{\mathbf{u}^\sharp}^\varepsilon(0; x^+) \|_Q^2 - \| y_{\mathbf{u}^\sharp}^\varepsilon(0; \tilde{x}^+) \|_Q^2 \nonumber\\
       \leq & 2 \|Q\| \tilde{c} (c_x^\varepsilon \|\hat{x}\|^2 + \frac{1}{2} c_u^\varepsilon (\|\hat{x}\|^2 + \|\mu_N^\varepsilon(\hat{x})\|^2)) \nonumber \\
       & + 2 \|Q\| \Big( (c_x^\varepsilon)^2 \|\hat{x}\|^2 + (c_u^\varepsilon)^2 \|\mu_N^\varepsilon(\hat{x})\|^2 \Big),
    \end{align*}
    where the last bound is obtained applying the same reasoning we used for \eqref{eq:diff-ell} considering $i+1=0$. 
    
    Substituting this back in \eqref{eq:VN:error}, \eqref{eq:W:error} and \eqref{eq:VN:error:bound}, we obtain
    \begin{eqnarray*}
        & & Y_N^\varepsilon(F_\mathrm{x}(\hat{x}, \mu_N^{\varepsilon}(\hat{x}))) \\
        & \leq & Y_N^\varepsilon(\hat{x}) -\frac{1}{\nu} \alpha_N W(\hat{x}) + C_x \| \hat{x} \|^2 + C_u \| \mu_N^\varepsilon(\hat{x}) \|^2,
    \end{eqnarray*}
    where $C_x := 2 \|Q\| \sum_{i=0}^{N} \big( \tilde{c} L_{\F_\mathrm{x}}^{i} (c_x^\varepsilon + \frac{1}{2} c_u^\varepsilon) + ( L_{\F_\mathrm{x}}^{i} c_x^\varepsilon)^2 \big)$ and $C_u := 2  \|Q\| \sum_{i=0}^{N} \big( \tilde{c} L_{\F_\mathrm{x}}^{i} \frac{1}{2} c_u^\varepsilon +  ( L_{\F_\mathrm{x}}^{i} c_u^\varepsilon)^2 \big)$.
    The values of $C_x$ and $C_u$ can be made arbitrarily small with sufficiently small proportionality constants $c_x^\varepsilon$ and $c_u^\varepsilon$. 
    Moreover, in view of the results of Proposition~\ref{prop:cost:controllability}, we have that $\|\mu_N^\varepsilon(\hat{x})\|_R^2 \leq \V_N(\hat{x}) \leq B_N^\varepsilon \|\hat{x}\|^2$,
    which implies that $\|\mu_N^\varepsilon(\hat{x})\| \leq \sqrt{B_N^\varepsilon / \underline{\lambda}(R)} \|\hat{x}\|$.
    Then, there exists a sufficiently small $\varepsilon_0$ 
    such that $c^\varepsilon_x$, $c^\varepsilon_u$ are sufficiently small to ensure the inequality
    \[
        -\frac{1}{\nu} \alpha_N W(\hat{x}) + C_x \| \hat{x} \|^2 + C_u \| \mu_N^\varepsilon(\hat{x}) \|^2 < -\karl{\frac{1}{\nu}}\bar{\alpha} \karl{W(\hat{x})} 
    \]
    for 
    $\bar{\alpha} \in \karl{(0,\alpha_N)}$ and for all $\varepsilon \in (0,\varepsilon_0)$. 
    This completes the proof and, thus, shows exponential stability of the MPC closed loop based on the surrogate model~\eqref{eq:NARX:sur}.
\end{proof}

\karl{The proof of Theorem~\ref{thm:AS} shows that for any desired suboptimality index $\bar{\alpha} \in (0, \alpha_N)$, there exist a sufficiently small $\varepsilon_0$ such that the data-driven MPC closed-loop converges with the corresponding rate. In particular, the convergence rate of the data-driven MPC approaches the convergence rate of the MPC with an exact prediction model as the approximation error tends to zero}.

\section{Kernel interpolation: Data-driven models} \label{sec:kernel}
\label{sec:kernel-regression}
In the following, we show how we can learn a function $\F_\mathrm{y}$ satisfying Assumption~\ref{asm:model:properties} from input-output data using kernel interpolation~\cite{wendland2004scattered}.

Specifically, we have a data set $\mathcal{X}$ consisting of $\xi_i=(x_i,u_i)\in\Omega\times \mathbb{U}=:\Omega_{\xi}\subseteq\mathbb{R}^{n+m}$ \karl{and} 
$y_i=F_{\mathrm{y}}(\xi_i)$, $i \in [1:D]$. 
Since we can identify each component independently, we focus on estimating $\F_{\mathrm{y}}$ for a scalar output ($p=1$) to simplify the exposition.
We denote the fill distance of this data by
\begin{align*}
h_\mathcal{X} \coloneqq \sup_{(x, u) \in \Omega\times \mathbb{U}}\min_{(x_i,u_i)\in \mathcal{X}}\|(x, u)-(x_i, u_i)\|.
\end{align*}
Suppose that this data set contains the origin, i.e.,  $0\in\mathcal{X}$.
Let $\k:\Omega_\xi\times\Omega_\xi\rightarrow\mathbb{R}_{\geq 0}$ be a symmetric, strictly positive kernel with corresponding reproducing kernel Hilbert space (RKHS) denoted by $\mathbb{H}$. Furthermore, suppose that $F_{\mathrm{y}} \in \mathbb{H}$. 
Kernel interpolation yields the unique function that interpolates the data with the minimal RKHS norm, which is given by
\begin{align}
    \F_{\mathrm{y}}(\xi) = \k_{\mathcal{X}}(\xi)^\top K_{\mathcal{X}}^{-1} Y,
\end{align}
with $\k_{\mathcal{X}}:\Omega_\xi\rightarrow\mathbb{R}^{D}$, $\k_{\mathcal{X}}(\xi) = (\k(\xi, \xi_i))_{i = 1}^D$, kernel matrix $K_\mathcal{X} = (\k(\xi_i, \xi_j))_{i, j = 1}^D\in\mathbb{R}^{D\times D}$, and $Y=(y_i)_{i=1}^D\in\mathbb{R}^{D}$. 
Kernel interpolation enjoys the following error bound (cf. \cite[Sec.~14.1]{fasshauer2015kernel})
\begin{align}
    |\F_{\mathrm{y}}(\xi)-F_{\mathrm{y}}(\xi)|\leq P(\xi)\|    F_{\mathrm{y}}\|_{\mathbb{H}}\qquad \forall\,\xi\in\Omega
\end{align}
with the power function $P:\Omega_\xi\rightarrow\mathbb{R}_{\geq 0}$,
\begin{align*}
    P(\xi)=\sqrt{\k(\xi,\xi)-\k_{\mathcal{X}}(\xi)^\top K_{\mathcal{X}}^{-1}\k_{\mathcal{X}}(\xi)}.
\end{align*}
Suppose the RKHS is norm equivalent to the Sobolev space
of order $s$, e.g., by choosing a corresponding Matern or Wendland kernel~$\k$ of sufficient smoothness. 
Then, according to~\cite[Thm.~5.4]{kanagawa2018gaussian}, the 
power function satisfies 
\begin{align*}
    P(\xi)\leq C h_{\mathcal{X}}^{s-(n+m)/2-1}\|\xi\|
\end{align*}
for some constant $C>0$, where we use the fact that $0\in\mathcal{X}$, i.e., the origin is contained in the data set. 
By setting $s>1+(n+m)/2$, we satisfy the proportional error bounds with $\varepsilon$ given by the fill distance~$h_{\mathcal{X}}$. 
Furthermore, if kernel $\k$ is twice continuous differentiable with a bounded Hessian, then both functions $\F_{\mathrm{y}},F_{\mathrm{y}}\in\mathbb{H}$ are also Lipschitz continuous~\cite{fiedler2023lipschitz}. 

In conclusion, Assumption~\ref{asm:model:properties} holds by using kernel interpolation if: (i) the data has a small enough fill distance $h_{\mathcal{X}}\rightarrow 0$, (ii) the equilibrium at the origin is contained in the data set, and (iii) the unknown function $F_{\mathrm{y}}$ lies in the RKHS~$\mathbb{H}$ with a suitably chosen kernel~$\k$.

\section{Numerical example} \label{sec:numerics} 
The exponential stability of the data-driven MPC is illustrated in a two-tank example~\cite{bistak2014model} described by
\begin{align}
\begin{split}\label{eq:ex:two_tanks}
    \dot{h}_1 &= c_{1,2} \sqrt{h_2 - h_1} + c_{2}\sqrt{h_1} \\ 
    \dot{h}_2 &= \frac{1}{A_1}u - c_{1,2}\sqrt{h_2-h_1}
\end{split}
\end{align}
with constants~$A_1 = 0.001, c_{1, 2} = 0.0254, c_{2} = 0.0261$. The system~\eqref{eq:ex:two_tanks} is integrated by using the classical fourth-order Runge–Kutta method (RK4) using a sampling time~$\Delta t = 10$s.
The output of the system is given by $y = h_1$, and the control objective is to steer the system to the equilibrium point $\bar{h} = (0.0438, 0.09)^\top$, $\bar{u} = 5.461 \cdot 10^{-6}$, while respecting the input constraint $u \in \mathbb{U} \coloneqq [3.16 \cdot 10^{-6}, 4.76 \cdot 10^{-5}]$.

The MPC algorithm is based on a surrogate model obtained via kernel interpolation, using the Wendland kernel function $\k(x, x') = \phi(\|x - x'\|)$, with $\phi(r) \coloneqq \frac{1}{30}(1-r)^5(5r + 1)$ for $r \in [0,1]$ and 0 otherwise.
The lag of the system is $\nu = 2$, leading to a model with a state $x \in \Omega \subseteq \R^3$. We consider the domain $\Omega = [0, 0.5]^2 \times \mathbb{U}$. 
To ensure proportional error bounds, the first data point is in the reference equilibrium, i.e. $x_1 = [\bar{h}_1, \bar{h}_1, \bar{u}]$, $u_1 = \bar{u}$ and $y_1 = \bar{h}_1$.
For the simulations, we consider datasets with \karl{$D \in \{100, 500, 2500\}$} data points.
In order to obtain models with an equilibrium point in the origin, the input and output data are shifted with respect to their reference and rescaled before the model identification.

For MPC, we consider a prediction horizon $N=20$ as well as weights $Q = 1$ and $R = 10^{-1}$. 
\karl{As comparison strategy, we consider the nominal MPC which uses the exact model \eqref{eq:ex:two_tanks} for prediction. The nominal MPC is implemented with the same cost function and prediction horizon of the data-driven MPC.}
\begin{figure}
    \vspace{2mm}
    \centering
    \includegraphics[width=0.7\linewidth]{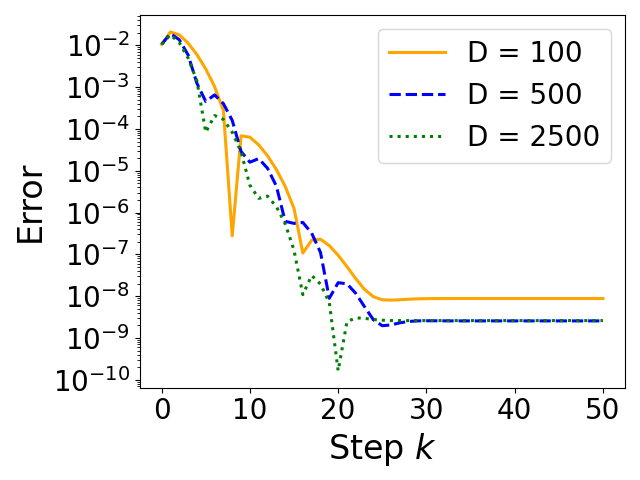}
    \caption{\karl{Output errors $\|h_1(k) - \bar{h}_1\|$ 
    of the data-driven MPC and the nominal MPC closed-loops for $k \in [0:50]$, where the models are obtained with $D \in \{100, 500, 2500\}$ data points}.
    }
    \label{fig:1}
\end{figure}
  
\begin{figure}[htbp]
  \centering
  
    \includegraphics[width=0.8\linewidth]{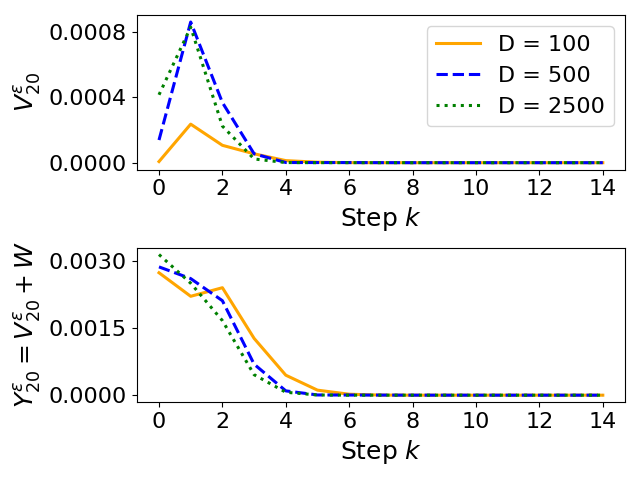}

  \caption{\karl{Optimal value function~$V^\varepsilon_N$ 
  and Lyapunov function~$Y^\varepsilon_N$ 
  (bottom) for horizon~$N = 20$ and where the models are obtained with $D \in \{100, 500, 2500\}$ data points}.} 
  \label{fig:2}
\end{figure}
The simulation results are reported in \karl{Figs.~\ref{fig:1} and \ref{fig:2}}. \karl{Fig.~\ref{fig:1} illustrates the tracking errors~$\|h_1(k) - \bar{h}_1\|$ of the data-driven MPC closed-loop, comparing the models computed with different numbers of data points, and of the nominal MPC closed-loop. It can be observed that the 
convergence rate is faster with a larger dataset, even if it does not reach the convergence speed of the closed-loop obtained with perfect system knowledge.
Moreover, Fig.~\ref{fig:2} shows the corresponding trajectories of the optimal value function~$V_N^\varepsilon$ and the Lyapunov function candidate~$Y_N^\varepsilon$. 
It is evident that $V_N^\varepsilon$ fails to serve as a Lyapunov function for any of the surrogate models. When considering $Y_N^\varepsilon$, we observe similar non-monotonic behaviour for $D = 100$ as seen with the optimal value function, however, with an increasing number of data points, i.e., $D \in \{500, 2500\}$, $Y_N^\varepsilon$ is monotonically decreasing. This verifies the findings of Theorem~\ref{thm:AS}, indicating that a sufficiently small modeling error ensures exponential stability. 
} 


\section{Conclusions} \label{sec:conclusions}

In this paper, we have provided sufficient conditions such that data-driven MPC without terminal conditions ensures exponential stability for nonlinear input-output systems. 
The key requirement is \karl{the combination of cost detectability and 
a proportional error bound. The latter 
can be achieved using kernel interpolation. Future work could consider investigating other data-driven models for proportional error bounds. 
Moreover, an} 
interesting open research direction is the inclusion of general noise in this framework using Gaussian process models~\cite{scampicchio2025gaussian}.

\bibliographystyle{ieeetr}
\bibliography{refs_input_output}

\end{document}